\documentclass[12pt]{amsart}
\usepackage{epsfig}
\usepackage{mathtools}
\usepackage[subnum]{cases}
\usepackage{enumerate}
\usepackage{bbm,bm}
\usepackage{amsmath,amssymb,mathrsfs,amsthm}
\usepackage{color}
\usepackage{verbatim}
\usepackage{graphicx}
\usepackage{ctable}
\graphicspath{ {./images/} }
\usepackage{hyperref}
\definecolor{amethyst}{rgb}{0.6, 0.4, 0.8}
\definecolor{blue-violet}{rgb}{0.54, 0.17, 0.89}
\definecolor{electricviolet}{rgb}{0.56, 0.0, 1.0}

\usepackage[hang]{footmisc} %flushes footnotes. to flush left use option ``[flushmargin]"
\newtheorem{theorem}{Theorem}%[section]

\newtheorem{lemma}[theorem]{Lemma}

\newtheorem{prop}[theorem]{Proposition}
\newtheorem{remark}[theorem]{Remark}

\newtheorem*{property*}{Property}

\newcommand{\R}{\mathbb{R}} 
\newcommand{\Sp}{\mathbb{S}^{d-1}}
\newcommand{\E}{\mathbb{E}}
\newcommand{\PP}{\mathbb{P}}

\DeclareMathOperator{\var}{Var}

\makeatletter
\def\hlinewd#1{%
\noalign{\ifnum0=`}\fi\hrule \@height #1 \futurelet
\reserved@a\@xhline}
\makeatother

\begin{document}
\setcounter{footnote}{0}

\title[CLT for Random Partial Sphere Coverings]{Central Limit Theorem for Random Partial\\ Sphere Coverings in High Dimensions}

%feel free to change the title, formatting, notation, etc.

    \author{Steven Hoehner and Christoph Th\"ale}
    \date{\today}

	\subjclass[2020]{Primary: 60D05; Secondary 60F05}
    \keywords{Central limit theorem, high-dimensional probability, random covering, rate of convergence, sphere covering, stochastic geometry}	

\begin{abstract}
We study a random partial covering model on the \((d-1)\)-dimensional unit sphere, where \(N\) spherical caps are placed independently and uniformly at random, each covering a surface fraction of \(1/N\). This model provides a continuous geometric analogue of the classical balls-into-bins problem. We establish a Central Limit Theorem for the volume of the resulting random partial covering, showing that its fluctuations are asymptotically Gaussian. Moreover, we obtain a quantitative bound on the rate of convergence in the Kolmogorov distance. Our results hold both in fixed dimension and in a high-dimensional regime where the dimension grows at most logarithmically with \(N\).
\end{abstract}

\maketitle

\section{Introduction and main result}

Given $N$ caps on the $(d-1)$-dimensional unit sphere $\Sp$, each covering a proportion $1/N$ of the sphere, what is the maximum area they can cover? This version of the partial sphere covering problem has recently been investigated in \cite{HK-DCG,HK-2026}. In the high-dimensional regime, this question is  closely connected to the classical problem of estimating the covering density of $\R^d$ in the spirit of Erd\H{o}s, Few, and Rogers \cite{EFR}, as well as to the optimality of random polytopes in the approximation of convex bodies, see the remarks in \cite{HK-DCG,HK-2026} for more background.

Let $\sigma=\sigma_{d-1}$ denote the normalized surface area measure on $\Sp$, such that $\sigma(\Sp)=1$. For $N\in\mathbb{N}$ and $x\in\Sp$, let $C_N(x)$ be the geodesic spherical cap centered at $x$ with $\sigma(C_N(x))=1/N$. The optimal partial covering is defined by
\begin{equation}
    V_N^*(d) := \max_{x_1,\ldots,x_N\in\Sp}\sigma\left(\bigcup_{i=1}^N C_N(x_i)\right).
\end{equation}

For each $N$, let $X_1,\ldots,X_N$ be independent random points on $\Sp$ sampled according to $\sigma$, and define the random partial covering as
\[
    V_N := \sigma\left(\bigcup_{i=1}^N C_N(X_i)\right),
\]
see Figure \ref{fig:Simulation}. In \cite{HK-DCG}, this random construction was introduced as a continuous, geometric analogue of the classical balls-into-bins problem from probability and theoretical computer science. To see the connection, imagine throwing $N$ balls uniformly at random into $N$ discrete bins. As $N$ grows, the proportion of occupied bins approaches $1-e^{-1}$. In our setting, the continuous sphere takes the role of the bins, and the $N$ spherical caps of measure $1/N$ act as the balls. Following this intuition, it was shown that the expected covered proportion of the sphere is
\begin{equation}\label{eq:expectation}
    \E[V_N] = 1-e^{-1}+O(N^{-1}),\qquad\text{as }N\to\infty,
\end{equation}
which turns out to be independent of the ambient dimension $d$. The authors in \cite{HK-DCG} also proved variance and concentration inequalities for \(V_N\), obtaining in particular a lower variance bound of the form
\begin{equation}\label{eq:VarLowBd}
\var(V_N)\geq e^{-C_1}c_1^{d-1}N^{-1}    
\end{equation}
whenever \(N\geq C^d\), where \(c_1\in(0,1)\) and \(C,C_1>0\) are absolute constants, see Proposition~\ref{thm:HK-variance} below. Since random configurations often perform well in high-dimensional geometry, it is natural to ask whether these random partial coverings reach the optimal deterministic bounds. Recent work in \cite{HK-2026} showed that, assuming the partial coverings are antipodal, the random construction does indeed approach the optimal bound in the high-dimensional regime. Specifically, the maximum covered proportion is $1-e^{-1}$, meaning the random partial covering achieves the correct first-order asymptotic behavior.

\begin{figure}[t]
    \centering
    \includegraphics[width=0.99\linewidth]{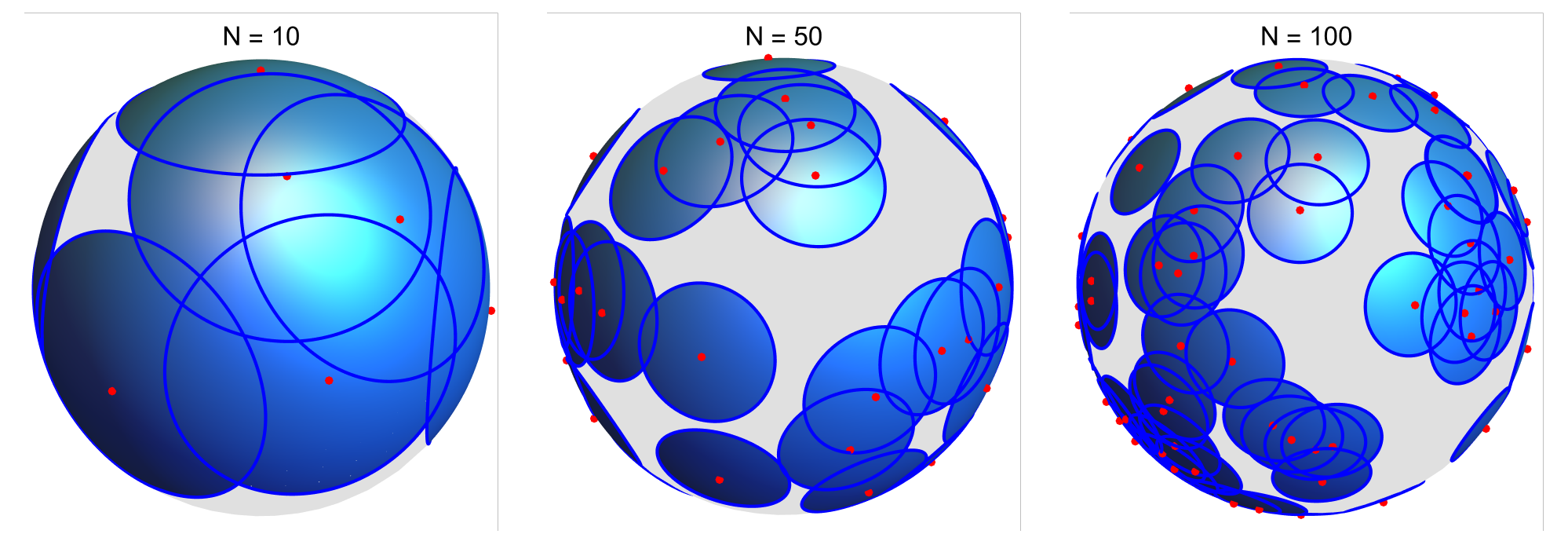}
    \caption{Simulations of random partial sphere coverings for $d=3$ and different values of $N$.}
    \label{fig:Simulation}
\end{figure}

Building on this, the present paper shows that the random partial covering is not only optimal in the mean, but that its fluctuations at the natural variance scale are asymptotically Gaussian. The main contribution of this paper is a Central Limit Theorem for random partial sphere coverings, which also provides a quantitative bound on the rate of convergence.  We measure this rate using the Kolmogorov distance between random variables \(X\) and \(Y\), defined as
\[
    d_K(X,Y) := \sup_{t\in\mathbb R}\big|\mathbb{P}(X\le t)-\mathbb{P}(Y\le t)\big|.
\]
Our result applies not only in fixed dimension, but also in a high-dimensional regime where the dimension is allowed to grow at most logarithmically with \(N\). Throughout this paper, a standard Gaussian random variable is denoted by \(\mathcal{N}(0,1)\) and convergence in distribution is indicated by the notation $\stackrel{d}{\longrightarrow}$.

\begin{theorem}\label{thm:mainThm}
    Let \(\alpha\in\bigl(0,(2\log(2/c_1))^{-1}\bigr)\) with $c_1$ as in \eqref{eq:VarLowBd}, and let \(d=d(N)\geq 2\) be a sequence of dimensions such that $d(N)\leq \alpha\log N$ for all sufficiently large \(N\). For each \(N\), let \(X_1,\ldots,X_N\) be independent random points on \(\Sp\) with distribution \(\sigma\), and define
    \[
    V_N:=\sigma\left(\bigcup_{i=1}^N C_N(X_i)\right),\qquad W_N:=\frac{V_N-\E[V_N]}{\sqrt{\var(V_N)}}.
    \]
    Then there exists an absolute constant \(C>0\), such that
    \[
    d_K(W_N,\mathcal N(0,1))\leq C\,N^{\alpha\log(2/c_1)-1/2}
    \]
    for all sufficiently large \(N\).
    In particular,
    \[
    W_N\stackrel{d}{\longrightarrow}\mathcal N(0,1)
    \qquad\text{as }N\to\infty.
    \]
\end{theorem}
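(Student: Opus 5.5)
The approach is to apply a normal approximation bound for symmetric functions of independent random variables with a Kolmogorov-distance rate. The natural choice is the second-order Poincaré / difference-operator bound of Chatterjee, in the Kolmogorov version of Lachièze-Rey and Peccati for functionals of i.i.d.\ inputs. Write $V_N=f(X_1,\ldots,X_N)$. For each $i$, the add-one/remove-one difference is
\[
D_iV_N=V_N-V_N^{(i)}=\sigma\Bigl(C_N(X_i)\setminus\bigcup_{j\neq i}C_N(X_j)\Bigr),
\]
where $X_i$ is replaced by an independent copy $X_i'$ in $V_N^{(i)}$. We use $|D_iV_N|\le 1/N$ deterministically. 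Second-order differences $D_{ij}V_N$ vanish unless $C_N(X_i)\cap C_N(X_j)\neq\emptyset$ (or the analogous condition with the copies), and they are always bounded by $2/N$ in absolute value.

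The Kolmogorov bound of Lachièze-Rey–Peccati has the schematic form
\[
d_K(W_N,\mathcal N(0,1))\le \frac{C}{\var(V_N)}\Bigl(\sqrt{B_1}+\sqrt{B_2}\Bigr)+\frac{C}{\var(V_N)^{3/2}}\sum_i \E|D_iV_N|^3+\frac{C}{\var(V_N)}\sqrt{B_3}+\cdots.
\]
Here the $B_k$ are sums over pairs and triples of indices of products of first- and second-order differences, including the terms $\E[\mathbf 1\{D_{ij}\ne0\}\,\mathbf 1\{D_{ik}\ne0\}\,|D_jV|^2|D_kV|^2]$ and their variants.

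The plan is to bound each of these quantities using only the deterministic bounds $|D|\le 2/N$ together with intersection probabilities. Two caps of measure $1/N$ intersect only if their centers are within twice the angular radius. The probability of this is $\sigma(C(2r_N))$, which by a cap-volume comparison is at most $2^{d-1}/N$ up to absolute constants; this doubling of the radius is where the factor $2$ in $\log(2/c_1)$ comes from. Consequently:
\begin{itemize}
\item the third-moment term is $\lesssim N\cdot N^{-3}$;
\item a pair sum of the type $\sum_{i,j,k}\PP(C_i\cap C_j\neq\emptyset,\,C_i\cap C_k\ne\emptyset)\,N^{-4}$ is $\lesssim N^3(2^{d}/N)^2N^{-4}=4^{d}N^{-3}$.
\end{itemize}
All the $B_k$ are thus $O(4^{d}N^{-3})$, up to an absolute constant base. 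Dividing by $\var(V_N)\ge e^{-C_1}c_1^{d-1}N^{-1}$ from \eqref{eq:VarLowBd}, which is Proposition~\ref{thm:HK-variance} and applicable since $N\ge C^d$ holds when $d\le\alpha\log N$ with $\alpha$ small, gives the leading contribution
\[
\frac{2^{d}N^{-3/2}}{c_1^{d}N^{-1}}=(2/c_1)^dN^{-1/2}\le N^{\alpha\log(2/c_1)-1/2}.
\]
The third-moment term gives $N^{-2}\,(c_1^{d}N^{-1})^{-3/2}=c_1^{-3d/2}N^{-1/2}$, which is also dominated by $(2/c_1)^dN^{-1/2}$. The constraint $\alpha<(2\log(2/c_1))^{-1}$ makes the exponent negative, so the right-hand side tends to zero and convergence in distribution follows.

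The main obstacle is the bookkeeping in the Lachièze-Rey–Peccati terms. Some of these terms involve interpolated configurations, where subsets of the points are replaced by independent copies, so the intersection events must be tracked across original and copied points. Every such term has to be shown to carry at least two independent "closeness" constraints, each costing $2^{d}/N$, rather than only one. I would first handle this by dominating every second-order difference with the indicator that the two relevant caps (original or copy) intersect. The resulting triple sums then factor through independence of the centers, conditionally on the shared index. The dimension-dependent constants must be tracked explicitly so that only absolute constants enter $C$.
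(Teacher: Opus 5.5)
Most of your plan matches the paper: the deterministic bound $|\Delta_1 f|\le 1/N$, second-order differences vanishing unless two caps (originals or copies) meet, $p_N\le 2^{d-1}/N$ by doubling the radius, and factoring the triple terms through conditional independence. There is a genuine gap, though, and it lies in the normal approximation theorem you plug these into. The Lachi\`eze-Rey--Peccati bound contains terms of the form $\var(V_N)^{-3/2}\sum_i \E|\Delta_i f|^3$ and $\var(V_N)^{-3/2}\sum_i \sqrt{\E|\Delta_i f|^6}$. With only $|\Delta_i f|\le 1/N$ and $\var(V_N)\ge e^{-C_1}c_1^{d-1}N^{-1}$, these are of order $c_1^{-3(d-1)/2}N^{-1/2}$, as you computed.

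Your claim that this is dominated by $(2/c_1)^dN^{-1/2}$ is false in general. The ratio of the two is $c_1^{3/2}\,(c_1^{-1/2}/2)^d$, which stays bounded as $d$ grows only if $c_1\ge 1/4$. Nothing is known about $c_1$ beyond $c_1\in(0,1)$. If $c_1<1/4$, then under $d\approx\alpha\log N$ this term is $N^{\frac32\alpha\log(1/c_1)-\frac12}$. For $\alpha$ close to $(2\log(2/c_1))^{-1}$ that exponent is positive; for example, $c_1=1/100$ gives roughly $N^{0.15}$. So your route does not give the stated rate, nor even the CLT on the full range of $\alpha$ in the theorem. What it does give is the weaker rate $N^{\alpha\max\{\log(2/c_1),\,\frac32\log(1/c_1)\}-1/2}$.

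The missing ingredient is a Berry--Esseen bound with no $\var(V_N)^{-3/2}$ terms at all. That is exactly why the paper uses Corollary~2.5 of Shao--Zhang (Proposition~\ref{prop:ShaoZhang}), where every contribution carries only the prefactor $N^{1/2}\var(V_N)^{-1}$. In that bound the moment term is $12N^{1/2}\var(V_N)^{-1}\sqrt{\E[(\Delta_1 f)^4]}\le 12N^{-3/2}\var(V_N)^{-1}$, which is only of order $c_1^{-d}N^{-1/2}$ and hence harmless. With that substitution your bookkeeping becomes the paper's: $\delta_1\le 4p_N/N^4$ and $\delta_2\le 16p_N^2/N^4$, leading to $(2/c_1)^dN^{-1/2}$.

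A minor point: your formula for $D_iV_N$ is the add-one cost. The replace-one difference is $\sigma(C_N(X_i)\setminus U_i)-\sigma(C_N(X_i')\setminus U_i)$, with $U_i$ the union of the other caps. The bound $1/N$ still holds for it.
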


\begin{remark}\label{rem:FixedDimRate}
The regime where $d$ is fixed is clearly covered by Theorem~\ref{thm:mainThm}. Indeed, if \(d\geq 2\) is fixed, then for every \(\alpha>0\) one has
$d\leq \alpha\log N$ for large enough $N$. On the other hand, in that situation one can read off the sharper rate \(N^{-1/2}\) directly from the estimate \eqref{eq:RateProof} obtained at the end of the proof of Theorem \ref{thm:mainThm}. Indeed, since \(d\) is fixed, the prefactor appearing on the right-hand side there is just a constant depending only on \(d\). Thus, for every fixed \(d\geq 2\), there exists a constant \(C_d>0\) such that, for large enough $N$,
\[
d_K(W_N,\mathcal N(0,1))\leq \frac{C_d}{\sqrt N}.
\]
\end{remark}

We believe that Theorem~\ref{thm:mainThm} is interesting for several reasons. First, it provides a second-order refinement of the asymptotic formula \eqref{eq:expectation}. While \eqref{eq:expectation} identifies the deterministic first-order term \(1-e^{-1}\), the variance determines the scale of the fluctuations, and the Central Limit Theorem identifies their limiting distribution. In this sense, the theorem upgrades a statement about the typical size of \(V_N\) to a precise probabilistic description of the random fluctuations around its mean. 

Second, the result shows that the geometric dependence created by the overlapping caps is sufficiently weak to permit Gaussian asymptotics. This is far from obvious, since \(V_N\) is not a sum of independent contributions, but a highly non-additive geometric functional of a dependent union set. Theorem~\ref{thm:mainThm} shows that, despite these interactions, the cumulative fluctuation of the covered volume is still governed by a normal law.

Third, the theorem fits naturally into the balls-and-bins analogy introduced in \cite{HK-DCG}. In the classical occupancy problem, once the law of large numbers identifies the limiting proportion of occupied bins, the next natural question concerns the distribution of the fluctuations around that limit. Our result provides the corresponding answer in the geometric setting of random partial sphere coverings.

Fourth, the theorem complements the recent high-dimensional optimality result from \cite{HK-2026}. There it was shown that, under the mild assumption of antipodality, random partial coverings achieve the correct first-order asymptotic value \(1-e^{-1}\). Theorem~\ref{thm:mainThm} adds a probabilistic refinement to this picture by describing the shape and size of the random deviations around this optimal value.

Finally, the quantitative nature of our result is also of interest. The Kolmogorov bound not only implies convergence in distribution, but gives an explicit rate at which the law of the standardized covered-volume functional approaches the Gaussian distribution. Moreover, the argument is robust enough to remain valid beyond the fixed-dimensional setting and continues to apply when the dimension is allowed to grow at most logarithmically with \(N\).

\begin{remark}\label{rem:LEAN}
The proof of Theorem~\ref{thm:mainThm} has been formally verified in \textsc{Lean4} with the help of the \textsc{Aristotle} framework. Moreover, we have formally verified Proposition~\ref{prop:ShaoZhang} below, taken from \cite{ShaoZhang2025}, in a form that makes the constant explicit. More precisely, we formalized Corollary~2.5 of \cite{ShaoZhang2025} and obtained the optimal value \(C=12\) for the constant appearing there. The only ingredients in the proof of Theorem~\ref{thm:mainThm} that have not yet been formally verified are the lower variance bound in Proposition~\ref{thm:HK-variance} from \cite{HK-DCG} and Theorem~2.1 of \cite{ShaoZhang2025}, on which Corollary~2.5 relies. The corresponding \textsc{Lean4} files are available on the authors' websites.
\end{remark}

\section{Preliminaries}

To prove Theorem \ref{thm:mainThm}, we will need a few auxiliary results. The first comes from \cite{HK-DCG} and gives a variance bound for $V_N$ of which only the lower bound will actually be used in the sequel.

\begin{prop}\cite[Theorem 4.1]{HK-DCG}\label{thm:HK-variance}
    Let $C>0$ be an absolute constant and assume that $N\geq C^d$. Then
    \[
e^{-C_1}c_1^{d-1}N^{-1}\leq\var(V_N) \leq e^{-1}c_2^{d-1}N^{-1}
    \]
    where $c_1,c_2\in(0,1)$ and $C_1\in(1,2)$ are absolute constants.
\end{prop}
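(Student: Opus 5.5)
We would prove both bounds from an exact integral formula for the variance, in three steps: derive the formula, reduce it to a one-dimensional integral over distances, then bound the positive and negative parts using cap-measure comparisons.

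\textbf{Step 1: exact formula.} Write \(V_N=\int_{\Sp}\mathbbm{1}\{y\in\bigcup_i C_N(X_i)\}\,d\sigma(y)\). Since \(y\in C_N(x)\) iff \(x\in C_N(y)\), a point \(y\) is uncovered iff no \(X_i\) lies in \(C_N(y)\). Let \(s(y,z):=\sigma(C_N(y)\cap C_N(z))\). Then \(\PP(y \text{ uncovered})=(1-1/N)^N\) and \(\PP(y,z \text{ both uncovered})=(1-2/N+s(y,z))^N\). By Fubini,
\[
\var(V_N)=\int_{\Sp}\Bigl[\bigl(1-\tfrac2N+s(e,z)\bigr)^N-\bigl(1-\tfrac1N\bigr)^{2N}\Bigr]\,d\sigma(z),
\]
where rotation invariance lets us fix \(y=e\), a pole. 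Here \(s(e,z)\) depends only on the geodesic distance \(t=\angle(e,z)\) and vanishes for \(t\ge 2\theta_N\), with \(\theta_N\) the angular radius of \(C_N\). We write \(s=u(t)/N\) with \(u(t)\in[0,1]\) and \(u(0)=1\).

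\textbf{Step 2: split the integrand.} Using \((1-a/N)^N=e^{-a}(1+O(N^{-1}))\) uniformly for \(a\in[0,2]\), the integrand equals \(e^{-2}(e^{u(t)}-1)\) plus an error \(-\kappa_N/N\), where \(\kappa_N\) is explicit and of order \(e^{-2}\). Integrating, the error term contributes \(O(N^{-1})\) with an absolute constant and no dimensional factor. The main term is
\[
e^{-2}\int_{\{t<2\theta_N\}}\bigl(e^{u(t)}-1\bigr)\,d\sigma(z),
\]
which is supported on the cap of radius \(2\theta_N\).

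The delicate point is that the main term is only of size \(c^{d-1}N^{-1}\), which is exponentially small in \(d\). So the \(O(N^{-1})\) term cannot simply be discarded. We would handle this by keeping the second-order expansion exactly: use \((1-2/N+s)^N-(1-1/N)^{2N}\ge (1-1/N)^{2N}\bigl[(1+Ns/(1-1/N)^2\cdots)\bigr]\)-type identities, or equivalently write the integrand as \(N\log\)-differences. Then the negative part is \(O(N^{-2})\) per unit measure and is dominated once \(N\ge C^d\).

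\textbf{Step 3: lower bound.} Restrict the integral to \(t\le\theta_N\). There the intersection \(C_N(e)\cap C_N(z)\) contains a cap of radius \(\theta_N/2\) centered at the geodesic midpoint. Standard cap-measure comparisons valid for small radii give \(\sigma(\text{cap}_{\theta/2})\ge c\,2^{-(d-1)}\sigma(\text{cap}_{\theta})\). Together with \(N\ge C^d\), which guarantees that \(\theta_N\) is small enough for these comparisons, this yields \(u\ge c^{d-1}\) on a set of measure \(1/N\). Hence \(e^{u}-1\ge u\) gives the lower bound \(e^{-C_1}c_1^{d-1}N^{-1}\).

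\textbf{Step 4: upper bound.} Bound \(e^u-1\le (e-1)u\) and use
\[
\int u(t)\,d\sigma(z)=N\int s\,d\sigma=N\cdot\sigma(C_N)^2=N^{-1}.
\]
This gives only a constant times \(N^{-1}\). To gain the factor \(c_2^{d-1}\), use instead \(e^u-1\le u\,e^{\sup u}\) together with the concentration of measure on \(\Sp\). In high dimension the overlap \(u(t)\) decays exponentially in \(d\) except for \(t\) very close to \(0\), and that region carries only a \(c^{d-1}\) fraction of the mass \(\int u=N^{-1}\).

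\textbf{Main obstacle.} We expect the hardest step to be this sharp upper bound, together with the uniform cap-measure asymptotics, and we would handle both through the incomplete-beta representation of cap measures. The lower bound is comparatively routine once the negative correction has been controlled as in Step 2.
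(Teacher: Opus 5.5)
Your Step 2 misjudges the orders of magnitude, and this breaks both Steps 3 and 4.

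First, the ``main term'' is not exponentially small in $d$. Since $e^u-1\ge u$ and, as you note in Step 4, $\int_{\Sp}u\,d\sigma=N\sigma(C_N)^2=N^{-1}$, it satisfies $e^{-2}\int_{\Sp}(e^{u}-1)\,d\sigma\ge e^{-2}N^{-1}$ in every dimension.

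Second, the negative part is not $O(N^{-2})$ per unit measure. For $t\ge 2\theta_N$ the integrand equals $(1-\tfrac{2}{N})^N-(1-\tfrac{2}{N}+\tfrac{1}{N^2})^N=-(1+o(1))\,e^{-2}N^{-1}$: the defect $N^{-2}$ in each factor is amplified by the exponent $N$. This holds on a set of measure $1-p_N\approx 1$. It reflects the negative correlation between distant points that comes from fixing the number of caps. In a Poissonized model it would vanish, and the variance would be exactly $e^{-2}\int(e^u-1)\,d\sigma\ge e^{-2}N^{-1}$, with no decay in $d$.

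So there are two contributions of order $N^{-1}$ that cancel at leading order, and $\var(V_N)$ is the remainder. Consequently Step 3 fails. The inequality $e^u-1\ge u$ keeps exactly the part that gets cancelled, and after subtracting the correction your estimate becomes $e^{-2}\bigl(2^{-(d-1)}-1+o(1)\bigr)N^{-1}<0$. Step 4 cannot work as stated either, because no upper bound for $\int(e^u-1)\,d\sigma$ can go below $N^{-1}$.

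The missing step is to expand to order $N^{-1}$ and let the identity $\int u\,d\sigma=N^{-1}$ produce the cancellation. You need three facts:
\begin{enumerate}
\item[(i)] $(1-a/N)^N=e^{-a}\bigl(1-\tfrac{a^2}{2N}+O(N^{-2})\bigr)$ uniformly for $a\in[0,2]$, applied with $a=2-u$;
\item[(ii)] $(1-\tfrac{1}{N})^{2N}=e^{-2}\bigl(1-\tfrac{1}{N}+O(N^{-2})\bigr)$;
\item[(iii)] $0\le 4-e^u(2-u)^2\le e\,u^2$ for $u\in[0,1]$, so that $\int e^u(2-u)^2\,d\sigma=4-O(N^{-1})$.
\end{enumerate}
Together these give
\[
\var(V_N)=e^{-2}\int_{\Sp}\bigl(e^{u}-1-u\bigr)\,d\sigma+O(N^{-2})
\]
with an absolute implied constant. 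Then $\tfrac12u^2\le e^u-1-u\le (e-2)u^2$ reduces both bounds to estimating $\int u^2\,d\sigma$.

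For the lower bound, apply Cauchy--Schwarz on the support $\{t<2\theta_N\}$ of $u$, whose measure is at most $p_N\le 2^{d-1}/N$ by Lemma~\ref{lem:pN-bound}. This gives $\int u^2\,d\sigma\ge N^{-2}/p_N\ge 2^{-(d-1)}N^{-1}$, and the hypothesis $N\ge C^d$ absorbs the $O(N^{-2})$ error. Your midpoint-cap argument also suffices, giving $4^{-(d-1)}N^{-1}$. Incidentally, the comparison $\sigma(C_{\theta/2})\ge 2^{-(d-1)}\sigma(C_\theta)$ holds for every $\theta$ via $\sin(v/2)\ge\tfrac12\sin v$, so no smallness of $\theta_N$ is needed.

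For the upper bound, your concentration heuristic is the right tool, but it must be applied to $\int u^2\,d\sigma=\int u\cdot u\,d\sigma$. The point is that $u$ is exponentially small in $d$ except on a region carrying an exponentially small share of the $u$-mass $N^{-1}$. That is what yields the bound $c_2^{d-1}N^{-1}$.

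The paper quotes this proposition from the cited source without proof, so there is no in-paper argument to compare against.
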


We also use the Berry--Esseen bound from \cite[Corollary~2.5]{ShaoZhang2025} for symmetric functions of independent random variables, see Remark \ref{rem:LEAN}. To rephrase it, let \(E\) be a measurable space, let \(X=(X_1,\dots,X_N)\) be a vector of i.i.d.\ \(E\)-valued random variables, let \(X'\) and \(\widetilde X\) be independent copies of \(X\), and let \(\mathcal C(X,X',\widetilde X)\) denote the collection of all recombinations of \(\{X,X',\widetilde X\}\). Here, by a \emph{recombination} we mean a random vector \(Z=(Z_1,\ldots,Z_N)\) such that, for each \(i\in[N]:=\{1,\ldots,N\}\), the coordinate \(Z_i\) is one of the three independent copies \(X_i\), \(X_i'\), and \(\widetilde X_i\), i.e., $Z_i\in\{X_i,X_i',\widetilde X_i\}$ for all $i\in[N]$. For \(Z\in \mathcal C(X,X',\widetilde X)\), \(i\in [N]\), and distinct \(i,j\in [N]\), define
\[
Z^{\{i\}}:=(Z_1,\dots,Z_{i-1},X_i',Z_{i+1},\dots,Z_N)
\]
and
\[
Z^{\{i,j\}}:=(Z_1,\dots,Z_{i-1},X_i',Z_{i+1},\dots,Z_{j-1},X_j',Z_{j+1},\dots,Z_N).
\]
If \(f:E^N\to\R\) is measurable, we set
\[
\Delta_i f(Z):=f(Z)-f(Z^{\{i\}})
\]
and
\begin{equation}\label{eq:Delta-i-j-def}
\Delta_{i,j}f(Z):=f(Z)-f(Z^{\{i\}})-f(Z^{\{j\}})+f(Z^{\{i,j\}}).
\end{equation}

\begin{prop}\label{prop:ShaoZhang}
Let $X=(X_1,\ldots,X_N)$ be a vector of $N\geq 1$ i.i.d.\ $E$-valued random variables and $f:E^N\to\R$ be a measurable function which is symmetric under all permutation of its arguments. Let $V_N:=f(X_1,\ldots,X_N)$, and define
\[
W_N:=\frac{V_N-\E[V_N]}{\sqrt{\var(V_N)}}.
\]
Then
\[
d_K(W_N,\mathcal N(0,1))
\leq
12\, N^{1/2}\var(V_N)^{-1}
\left(
\sqrt{N\delta_1}+N\sqrt{\delta_2}+\sqrt{\E[(\Delta_1 f(X))^4]}
\right),
\]
where
\[
\delta_1
:=
\sup_{Y,Z\in\mathcal C(X,X',\widetilde X)}
\E\!\left[
\mathbf 1\{\Delta_{1,2}f(Y)\neq 0\}\,(\Delta_1 f(Z))^4
\right]
\]
and
\[
\delta_2
:=
\sup_{Y,Y',Z\in\mathcal C(X,X',\widetilde X)}
\E\!\left[
\mathbf 1\{\Delta_{1,2}f(Y)\neq 0,\ \Delta_{1,3}f(Y')\neq 0\}\,(\Delta_2 f(Z))^4
\right].
\]
\end{prop}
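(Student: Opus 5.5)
The proposition is the specialization of the general Berry--Esseen bound for functions of independent random variables, \cite[Theorem~2.1]{ShaoZhang2025}, to symmetric $f$ and i.i.d.\ inputs. I would therefore not reprove the Stein-method core. Instead I would take Theorem~2.1 as given and carry out the reduction that yields \cite[Corollary~2.5]{ShaoZhang2025}, keeping track of the numerical constant.

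In the form I will assume, Theorem~2.1 bounds $d_K(W_N,\mathcal N(0,1))$ by an absolute constant times $\var(V_N)^{-1}$ times three index sums:
\begin{itemize}
\item a sum over ordered pairs $(i,j)$ of terms $\E[\mathbf 1\{\Delta_{i,j}f(Y)\neq 0\}(\Delta_i f(Z))^4]$;
\item a sum over triples $(i,j,k)$ of terms $\E[\mathbf 1\{\Delta_{i,j}f(Y)\neq0,\ \Delta_{i,k}f(Y')\neq0\}(\Delta_j f(Z))^4]$;
\item a sum over $i$ of the fourth moments $\E[(\Delta_i f(X))^4]$.
\end{itemize}
Each term carries a supremum over recombinations $Y,Y',Z\in\mathcal C(X,X',\widetilde X)$. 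These sums enter under square roots, together with an overall factor coming from the interpolation (Chatterjee--type $T_A$) construction. The key steps, in order, are the following.

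First, I would note that every quantity above is invariant under relabelling of coordinates. Let $\pi$ be a permutation of $[N]$ and apply $\pi$ simultaneously to $X,X',\widetilde X$. Because $f$ is symmetric and the three copies are i.i.d., this maps $\mathcal C(X,X',\widetilde X)$ onto itself. It also maps $\Delta_i f$ to $\Delta_{\pi(i)}f$ and $\Delta_{i,j}f$ to $\Delta_{\pi(i),\pi(j)}f$, and it preserves joint laws. Hence each summand, after taking the supremum over recombinations, equals its value at $(i,j)=(1,2)$ or at $(i,j,k)=(2,1,3)$ (up to renaming), which is exactly $\delta_1$ or $\delta_2$. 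Degenerate index coincidences such as $j=k$ have to be checked separately. They are either excluded in the formulation of Theorem~2.1, or they are dominated by the $\delta_1$ term, since dropping one indicator only enlarges the expectation.

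Second comes the counting. The pair sum is at most $N^2\delta_1$, the triple sum at most $N^3\delta_2$, and the single sum at most $N\,\E[(\Delta_1 f(X))^4]$. Taking square roots and absorbing the prefactor of Theorem~2.1, which is of order $N^{1/2}$ after factoring one $\sqrt N$ out of each term, gives
\[
N^{1/2}\var(V_N)^{-1}\bigl(\sqrt{N\delta_1}+N\sqrt{\delta_2}+\sqrt{\E[(\Delta_1 f(X))^4]}\bigr)
\]
up to the absolute constant. Third, I would trace the constants in Theorem~2.1 through these estimates and combine them to obtain the value $12$. This is the step formalized in \textsc{Lean4}, see Remark~\ref{rem:LEAN}.

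I expect the main obstacle to be matching the precise shape of the error terms in Theorem~2.1 to $\delta_1$ and $\delta_2$. The issue is verifying that the recombinations appearing there, and in particular which copy sits in the swapped coordinates, all lie in $\mathcal C(X,X',\widetilde X)$ after the relabelling, so that a single supremum dominates every summand. Tracking the constant to exactly $12$ rather than some unspecified $C$ is also delicate. To handle both, I would first write out the symmetrization step for a generic summand explicitly, and only then sum.
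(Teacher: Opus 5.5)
Your plan follows the paper's route. The paper gives no argument of its own: it imports the bound as Corollary~2.5 of \cite{ShaoZhang2025}, which relies on their Theorem~2.1, and gets the constant $12$ from the \textsc{Lean4} formalization of that corollary (Remark~\ref{rem:LEAN}). Specializing Theorem~2.1 to symmetric $f$ with i.i.d.\ inputs by relabelling and counting is the same reduction. As in the paper's text, your sketch does not itself establish the value $12$, because that depends on the exact form and constants of Theorem~2.1, which you assume rather than quote.
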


In our situation, the function $f$ will be given by
\[
f(x_1,\dots,x_N):=\sigma\!\left(\bigcup_{i=1}^N C_N(x_i)\right),\qquad (x_1,\ldots,x_N)\in (\Sp)^N,
\]
so that the partial covered volume functional is \(V_N=f(X_1,\dots,X_N)\). Since \(f\) is symmetric, Proposition \ref{prop:ShaoZhang} applies. Thus, in order to prove Theorem \ref{thm:mainThm}, it will be enough to estimate the fourth moment of the first replacement difference and the interaction terms $\delta_1$ and $\delta_2$.

\section{Proof of Theorem \ref{thm:mainThm}}

Let
\[
f(x_1,\ldots,x_N):=\sigma\left(\bigcup_{i=1}^N C_N(x_i)\right),
\qquad (x_1,\ldots,x_N)\in (\Sp)^N,
\]
and recall that $\Delta_1 f(X)=f(X)-f(X^{\{1\}})$,
where \(X^{\{1\}}=(X_1',X_2,\ldots,X_N)\) and \(X_1'\) is an independent copy of \(X_1\).

\begin{lemma}\label{lem:first-difference-bound}
For every recombination $Z=(Z_1,\ldots,Z_N)\in\mathcal C(X,X',\widetilde X)$, we have
\[
|\Delta_1 f(Z)|\le \frac{1}{N}\qquad\text{a.s.}
\]
In particular,
\[
\E\big[(\Delta_1 f(Z))^4\big]\le \frac{1}{N^4}.
\]
\end{lemma}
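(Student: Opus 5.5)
The bound is a deterministic inequality about the covered volume. It therefore suffices to show that for \emph{any} fixed points $z_1,\ldots,z_N,y\in\Sp$,
\[
\Bigl|f(z_1,\ldots,z_N)-f(y,z_2,\ldots,z_N)\Bigr|\le \frac1N .
\]
Applying this pointwise with $y=X_1'$ and $z_i=Z_i$ gives the almost sure statement for every recombination $Z$. By definition, $Z^{\{1\}}$ differs from $Z$ only in the first coordinate, which is replaced by $X_1'$.

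Set $U:=\bigcup_{i=2}^N C_N(z_i)$. Then $f(z_1,\ldots,z_N)=\sigma(U\cup C_N(z_1))$ and $f(y,z_2,\ldots,z_N)=\sigma(U\cup C_N(y))$. Monotonicity and subadditivity of $\sigma$ give
\[
\sigma(U)\le \sigma(U\cup C_N(x))\le \sigma(U)+\sigma(C_N(x))=\sigma(U)+\frac1N
\]
for every $x\in\Sp$. Both values of $f$ therefore lie in the interval $[\sigma(U),\sigma(U)+1/N]$, and their difference is at most $1/N$ in absolute value. Equivalently, the difference equals $\sigma(C_N(z_1)\setminus U)-\sigma(C_N(y)\setminus U)$, and each term lies in $[0,1/N]$.

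Raising the almost sure bound to the fourth power gives $(\Delta_1 f(Z))^4\le N^{-4}$ almost surely, and taking expectations gives the second claim. Measurability of $\Delta_1 f(Z)$ is not a real obstacle: $f$ is continuous, since moving one cap center slightly changes the covered set only by a set of small measure, so all quantities are random variables. The only point requiring care is that the bound holds for every recombination, and this is automatic because the inequality above is uniform over all point configurations.
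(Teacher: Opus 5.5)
Your proof is correct and matches the paper's argument. Like the paper, you isolate $U=\bigcup_{i\ge 2}C_N(Z_i)$, note that both values of $f$ equal $\sigma(U)$ plus the uncovered part of a single cap of measure $1/N$, and then raise the almost sure bound to the fourth power. Your interval formulation and the remark on measurability are harmless extras.
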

\begin{proof}
Fix a recombination $Z=(Z_1,\ldots,Z_N)\in\mathcal C(X,X',\widetilde X)$ and write
\[
U:=\bigcup_{i=2}^N C_N(Z_i).
\]
Then
\[
f(Z)=\sigma\big(U\cup C_N(Z_1)\big)
\qquad\text{and}\qquad
f(Z^{\{1\}})=\sigma\big(U\cup C_N(X_1')\big).
\]
Therefore,
\[
\Delta_1 f(Z)
=
\sigma\big(U\cup C_N(Z_1)\big)-\sigma\big(U\cup C_N(X_1')\big).
\]
Using the identity
\[
\sigma(U\cup A)=\sigma(U)+\sigma(A\setminus U),
\]
valid for every measurable set \(A\subseteq \Sp\), we obtain
\[
\Delta_1 f(Z)
=
\sigma\big(C_N(Z_1)\setminus U\big)-\sigma\big(C_N(X_1')\setminus U\big).
\]
Now both quantities on the right-hand side are nonnegative and bounded above by \(1/N\), since each spherical cap has \(\sigma\)-measure \(1/N\). Hence
\[
|\Delta_1 f(Z)|\le \frac{1}{N}.
\]
Raising to the fourth power and taking expectations gives the second claim.
\end{proof}

To deal with the second-order replacement differences in $\delta_1$ and $\delta_2$, we will also need the following locality statement.

\begin{lemma}\label{lem:locality}
    Let $Y=(Y_1,\ldots,Y_N)\in\mathcal{C}(X,X',\widetilde{X})$, and set $U:=\bigcup_{i=3}^N C_N(Y_i)$ and
    \[
    A:=C_N(Y_1),\quad A':=C_N(X_1'),\quad B:=C_N(Y_2),\quad B':=C_N(X_2').
    \]
    Then
    \begin{align}\label{eq:locality-lem-eqn}
        \Delta_{1,2}f(Y) &= \sigma(U\cup A\cup B)-\sigma(U\cup A'\cup B)
        -\sigma(U\cup A\cup B')+\sigma(U\cup A'\cup B').
    \end{align}
    Moreover,
    \begin{align*}
\{\Delta_{1,2}f(Y)\neq 0\}\Longrightarrow\, &(A\cap B)\cup(A\cap B')\cup(A'\cap B)\cup(A'\cap B')\neq\varnothing.
    \end{align*}
\end{lemma}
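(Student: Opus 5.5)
The plan is to unpack the definition of \(\Delta_{1,2}f(Y)\) from \eqref{eq:Delta-i-j-def} and then to use inclusion--exclusion relative to the fixed set \(U\). For the first claim, note that \(Y^{\{1\}}\) replaces \(Y_1\) by \(X_1'\), so its caps are \(A',B\) together with the caps \(C_N(Y_i)\), \(i\ge 3\). Likewise, \(Y^{\{2\}}\) has caps \(A,B'\), and \(Y^{\{1,2\}}\) has caps \(A',B'\), in each case together with the caps making up \(U\). Substituting these into the four terms of \eqref{eq:Delta-i-j-def}, with \(f(Y)=\sigma(U\cup A\cup B)\), gives \eqref{eq:locality-lem-eqn} directly.

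For the implication, I argue by contraposition. Suppose all four intersections \(A\cap B\), \(A\cap B'\), \(A'\cap B\), \(A'\cap B'\) are empty. For any \(P\in\{A,A'\}\) and \(Q\in\{B,B'\}\) with \(P\cap Q=\varnothing\), I write
\[
\sigma(U\cup P\cup Q)=\sigma(U)+\sigma(P\setminus U)+\sigma(Q\setminus (U\cup P)).
\]
Because \(Q\cap P=\varnothing\), we have \(Q\setminus(U\cup P)=Q\setminus U\). Hence
\[
\sigma(U\cup P\cup Q)=\sigma(U)+\sigma(P\setminus U)+\sigma(Q\setminus U),
\]
which is additive in the separate contributions of \(P\) and \(Q\).

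I then insert this into the alternating sum \eqref{eq:locality-lem-eqn}. The \(\sigma(U)\) terms appear with signs \(+,-,-,+\) and cancel. The term \(\sigma(A\setminus U)\) appears with \(+\) and \(-\), and \(\sigma(A'\setminus U)\) appears with \(-\) and \(+\). The same holds for \(B\) and \(B'\). Therefore \(\Delta_{1,2}f(Y)=0\), which proves the contrapositive.

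No step here is a genuine obstacle. The only care needed is bookkeeping. First, the replacement in \(Y^{\{i\}}\) is always by \(X_i'\), whatever \(Y_i\) was. Second, \(U\) depends only on \(Y_3,\dots,Y_N\), so it is common to all four terms. Measurability is not an issue, since the sets involved are finite unions of closed caps.
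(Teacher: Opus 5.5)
Your proposal is correct and matches the paper's proof: you unpack \eqref{eq:Delta-i-j-def} to get the identity, then argue by contraposition using $\sigma(U\cup P\cup Q)=\sigma(U)+\sigma(P\setminus U)+\sigma(Q\setminus U)$ for disjoint caps $P,Q$, after which the alternating sum cancels term by term. The only difference is cosmetic: you derive this decomposition via $Q\setminus(U\cup P)=Q\setminus U$, whereas the paper writes $U\cup P\cup Q$ directly as a disjoint union.
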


\begin{proof}
    The identity for $\Delta_{1,2}f(Y)$ in the first part follows from \eqref{eq:Delta-i-j-def}. For the second part, assume that  
    \[
A\cap B=A\cap B'=A'\cap B=A'\cap B'=\varnothing.
    \]
We will show that $\Delta_{1,2}f(Y)=0$. If $A\cap B=\varnothing$, then 
\[
U\cup A\cup B=U\sqcup(A\setminus U)\sqcup (B\setminus U),
\]
where $\sqcup$ denotes the disjoint union. Hence,
\[
\sigma(U\cup A\cup B)=\sigma(U)+\sigma(A\setminus U)+\sigma(B\setminus U).
\]
Similarly, the conditions $A\cap B'=\varnothing$, $A'\cap B=\varnothing$, and $A'\cap B'=\varnothing$ imply
\begin{align*}
  \sigma(U\cup A'\cup B)&=\sigma(U)+\sigma(A'\setminus U)+\sigma(B\setminus U),\\
  \sigma(U\cup A\cup B')&=\sigma(U)+\sigma(A\setminus U)+\sigma(B'\setminus U),\\
  \sigma(U\cup A'\cup B')&=\sigma(U)+\sigma(A'\setminus U)+\sigma(B'\setminus U).
\end{align*}
Substituting these four expressions back into \eqref{eq:locality-lem-eqn}, all terms cancel and we get $\Delta_{1,2}f(Y)=0$. Taking the contrapositive statement, we see that the event $\Delta_{1,2}f(Y)\neq 0$ implies that at least one of the intersections $A\cap B, A\cap B', A'\cap B, A'\cap B'$ is nonempty.
\end{proof}

We now deal with the quantities $\delta_1$ and $\delta_2$ in Proposition~\ref{prop:ShaoZhang}. In a first step, we reduce them to an intersection probability for random spherical caps.

\begin{lemma}\label{lem:delta1-delta2-reduction}
Let $U_1,U_2$ be independent random points on $\Sp$ with distribution $\sigma$, and define
\[
    p_N := \PP\big(C_N(U_1)\cap C_N(U_2)\neq\varnothing\big).
\]
Then
\[
    \delta_1\le \frac{4}{N^4}\,p_N
    \qquad\text{and}\qquad
    \delta_2\le \frac{16}{N^4}\,p_N^2.
\]
\end{lemma}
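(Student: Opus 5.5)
The plan is to combine the deterministic bound of Lemma~\ref{lem:first-difference-bound} with the locality statement of Lemma~\ref{lem:locality}. Together they reduce $\delta_1$ and $\delta_2$ to probabilities that caps intersect.

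\emph{The bound for $\delta_1$.} Fix $Y,Z\in\mathcal C(X,X',\widetilde X)$. By Lemma~\ref{lem:first-difference-bound}, $(\Delta_1 f(Z))^4\le N^{-4}$ almost surely, so
\[
\E\big[\mathbf 1\{\Delta_{1,2}f(Y)\neq0\}(\Delta_1 f(Z))^4\big]\le N^{-4}\,\PP(\Delta_{1,2}f(Y)\neq 0).
\]
By Lemma~\ref{lem:locality}, the event $\{\Delta_{1,2}f(Y)\neq0\}$ is contained in the union of the four events $A\cap B\neq\varnothing$, $A\cap B'\neq\varnothing$, $A'\cap B\neq\varnothing$ and $A'\cap B'\neq\varnothing$. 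Here $A,A'$ are caps centered at $Y_1,X_1'$ and $B,B'$ are caps centered at $Y_2,X_2'$.

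Each of $Y_1,X_1'$ is a coordinate taken from index $1$ of one of the three independent copies. Each of $Y_2,X_2'$ comes from index $2$. Since all the $X_i,X_i',\widetilde X_i$ are mutually independent and $\sigma$-distributed, each relevant pair of centers is a pair of independent $\sigma$-distributed points. Hence each event has probability exactly $p_N$. A union bound gives $\PP(\Delta_{1,2}f(Y)\neq0)\le 4p_N$. Taking the supremum over $Y,Z$ yields $\delta_1\le 4N^{-4}p_N$.

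\emph{The bound for $\delta_2$.} Again bound $(\Delta_2 f(Z))^4\le N^{-4}$; Lemma~\ref{lem:first-difference-bound} applies to index $2$ by symmetry of $f$, or by the same argument. It remains to show
\[
\PP\big(\Delta_{1,2}f(Y)\neq0,\ \Delta_{1,3}f(Y')\neq0\big)\le 16p_N^2.
\]
Apply Lemma~\ref{lem:locality} to both differences; for $\Delta_{1,3}$ it is applied with index $3$ in place of $2$. The first event is contained in a union of four events of the form $\{C_N(P)\cap C_N(Q)\neq\varnothing\}$, with $P\in\{Y_1,X_1'\}$ and $Q\in\{Y_2,X_2'\}$. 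The second is contained in a union of four such events with $P'\in\{Y_1',X_1'\}$ and $R\in\{Y_3',X_3'\}$. The union bound reduces the task to bounding each of the $16$ joint probabilities
\[
\PP\big(C_N(P)\cap C_N(Q)\neq\varnothing,\ C_N(P')\cap C_N(R)\neq\varnothing\big)
\]
by $p_N^2$.

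\emph{Main obstacle.} The points $P$ and $P'$ may coincide (for instance both equal to $X_1'$), so the two events are not obviously independent. I would handle this by conditioning on the index-$1$ coordinates $\mathcal F_1:=\sigma(X_1,X_1',\widetilde X_1)$. The point $Q$ is a function of the index-$2$ coordinates and $R$ of the index-$3$ coordinates, and these two families are independent of each other and of $\mathcal F_1$. Conditionally on $\mathcal F_1$, the two events are therefore independent. By rotation invariance of $\sigma$, for every fixed $x$,
\[
\PP\big(C_N(x)\cap C_N(Q)\neq\varnothing\big)=\sigma\big(\{y:\ d(x,y)\le 2\theta_N\}\big)=p_N,
\]
where $\theta_N$ is the angular radius of the caps. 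So each conditional probability equals $p_N$, and the conditional probability of the joint event is $p_N^2$. Taking expectations gives the bound $p_N^2$ for each of the 16 terms. Hence $\delta_2\le 16N^{-4}p_N^2$.
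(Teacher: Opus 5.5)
Your proposal is correct and follows the paper's proof essentially step for step. You use the a.s.\ bound $|\Delta_i f|\le 1/N$, then the locality lemma with a union bound to get $4p_N$, and for $\delta_2$ a union bound over the $16$ pairs of intersection events, each bounded by $p_N^2$ via conditioning on the index-$1$ variables and rotation invariance; the only cosmetic difference is that you condition on the full $\sigma$-algebra $\mathcal F_1$ of index-$1$ coordinates rather than on the caps $A,A'$, which is if anything slightly cleaner.
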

\begin{proof}
By Lemma~\ref{lem:first-difference-bound}, for every recombination $Z\in\mathcal C(X,X',\widetilde X)$, we have $|\Delta_1 f(Z)|\le \frac{1}{N}$ almost surely. Hence,
\begin{equation}\label{eq:delta1-first-reduction}
\delta_1
\le
\frac{1}{N^4}
\sup_{Y\in\mathcal C(X,X',\widetilde X)}
\PP\big(\Delta_{1,2}f(Y)\neq 0\big).
\end{equation}
Now let $Y=(Y_1,\dots,Y_N)\in \mathcal C(X,X',\widetilde X)$. By Lemma \ref{lem:locality}, the event $\{\Delta_{1,2}f(Y)\neq 0\}$ can occur only if at least one of the following four intersections takes place:
\begin{align*}
C_N(Y_1)\cap C_N(Y_2)&\neq\varnothing,\\
C_N(Y_1)\cap C_N(X_2')&\neq\varnothing,\\
C_N(X_1')\cap C_N(Y_2)&\neq\varnothing,\\
C_N(X_1')\cap C_N(X_2')&\neq\varnothing.
\end{align*}
Therefore, by the union bound,
\begin{align}
\PP\big(\Delta_{1,2}f(Y)\neq 0\big)
&\le
\PP\big(C_N(Y_1)\cap C_N(Y_2)\neq\varnothing\big)\notag\\
&\quad+
\PP\big(C_N(Y_1)\cap C_N(X_2')\neq\varnothing\big)\notag\\
&\quad+
\PP\big(C_N(X_1')\cap C_N(Y_2)\neq\varnothing\big)\notag\\
&\quad+
\PP\big(C_N(X_1')\cap C_N(X_2')\neq\varnothing\big).\label{eq:union-bound-delta12}
\end{align}
Since in each of the four terms the two cap centers are independent and $\sigma$-distributed on $\Sp$, each probability on the right-hand side equals $p_N$. Thus,
\[
\PP\big(\Delta_{1,2}f(Y)\neq 0\big)\le 4p_N
\qquad\text{for all }Y\in\mathcal C(X,X',\widetilde X).
\]
Combining this with \eqref{eq:delta1-first-reduction}, we obtain
\[
\delta_1\le \frac{4}{N^4}\,p_N.
\]

We next consider the term $\delta_2$. Again by Lemma~\ref{lem:first-difference-bound}, now applied to the second coordinate, we have $|\Delta_2 f(Z)|\le \frac{1}{N}$ almost surely for every $Z\in\mathcal C(X,X',\widetilde X)$. Hence, as above,
\begin{equation}\label{eq:delta2-first-reduction}
\delta_2
\le
\frac{1}{N^4}
\sup_{Y,Y'\in\mathcal C(X,X',\widetilde X)}
\PP\big(\Delta_{1,2}f(Y)\neq 0,\ \Delta_{1,3}f(Y')\neq 0\big).
\end{equation}
Fix $Y,Y'\in\mathcal C(X,X',\widetilde X)$. As above, the event $\{\Delta_{1,2}f(Y)\neq 0\}$ can occur only if at least one of the four intersections
\begin{align}
C_N(Y_1)\cap C_N(Y_2)&\neq\varnothing,\notag\\
C_N(Y_1)\cap C_N(X_2')&\neq\varnothing,\notag\\
C_N(X_1')\cap C_N(Y_2)&\neq\varnothing,\notag\\
C_N(X_1')\cap C_N(X_2')&\neq\varnothing\label{eq:delta2-four-intersections-12}
\end{align}
occurs. Likewise, the event $\{\Delta_{1,3}f(Y')\neq 0\}$ can occur only if at least one of the four intersections
\begin{align}
C_N(Y_1')\cap C_N(Y_3')&\neq\varnothing,\notag\\
C_N(Y_1')\cap C_N(X_3')&\neq\varnothing,\notag\\
C_N(X_1')\cap C_N(Y_3')&\neq\varnothing,\notag\\
C_N(X_1')\cap C_N(X_3')&\neq\varnothing\label{eq:delta2-four-intersections-13}
\end{align}
occurs. Therefore, by another union bound,
\begin{equation}\label{eq:delta2-union}
\PP\big(\Delta_{1,2}f(Y)\neq 0,\ \Delta_{1,3}f(Y')\neq 0\big)
\le
\sum_{a=1}^4\sum_{b=1}^4 \PP(E_a\cap E_b'),
\end{equation}
where $E_1,\dots,E_4$ are the four events in \eqref{eq:delta2-four-intersections-12} and $E_1',\dots,E_4'$ are the four events in \eqref{eq:delta2-four-intersections-13}.

Fix $a,b\in\{1,\dots,4\}$. Then the event $E_a\cap E_b'$ can be written in the form
\[
E_a\cap E_b'
=
\{A\cap B\neq\varnothing,\ A'\cap C\neq\varnothing\},
\]
where $A,A'$ are caps determined by variables at coordinate $1$, $B$ is a cap determined by variables at coordinate $2$, and $C$ is a cap determined by variables at coordinate $3$. Since the variables at coordinates $2$ and $3$ are independent of each other and independent of those at coordinate $1$, the caps $B$ and $C$ are conditionally independent given $A,A'$. Consequently,
\[
\PP(E_a\cap E_b'\mid A,A')
=
\PP(A\cap B\neq\varnothing\mid A)\,
\PP(A'\cap C\neq\varnothing\mid A').
\]
Moreover, all caps of the form $C_N(u)$, $u\in\Sp$, have the same radius, and $\sigma$ is rotation-invariant on $\Sp$. Hence, if $U\in\Sp$ has distribution $\sigma$, then for every fixed cap $D$ of the form $C_N(u)$,
\[
\PP(C_N(U)\cap D\neq\varnothing)=p_N.
\]
Applying this with $D=A$ and $D=A'$, we obtain
\[
\PP(E_a\cap E_b'\mid A,A')=p_N^2.
\]
Taking expectations yields
\[
\PP(E_a\cap E_b')=p_N^2.
\]
Since this holds for every pair $(a,b)$, it follows from \eqref{eq:delta2-union} that
\[
\PP\big(\Delta_{1,2}f(Y)\neq 0,\ \Delta_{1,3}f(Y')\neq 0\big)
\le
\sum_{a=1}^4\sum_{b=1}^4 p_N^2
=
16\,p_N^2.
\]
Combining this with \eqref{eq:delta2-first-reduction}, we arrive at
\[
\delta_2\le \frac{16}{N^4}\,p_N^2.
\]
This completes the proof.
\end{proof}

In the next step, we bound the intersection probability $p_N$ for random spherical caps, which has appeared in Lemma \ref{lem:delta1-delta2-reduction}.

\begin{lemma}\label{lem:pN-bound}
For every \(N\ge 2\),
\[
p_N\le \frac{2^{d-1}}{N}.
\]
\end{lemma}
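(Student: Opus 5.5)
Two caps $C_N(u_1)$ and $C_N(u_2)$ of common geodesic radius $\theta_N$ intersect if and only if the geodesic distance between their centers is at most $2\theta_N$, i.e.\ iff $u_2\in C^{(2)}(u_1)$, the cap of radius $2\theta_N$ centered at $u_1$. Conditioning on $U_1$ and using rotation invariance of $\sigma$, we get
\[
p_N=\sigma\bigl(\text{cap of geodesic radius }2\theta_N\bigr).
\]
The task is thus to compare the measure of a cap of doubled radius with that of the original cap, which has measure $1/N$. We aim to show the doubling inequality $\sigma(\text{cap of radius }2\theta)\le 2^{d-1}\sigma(\text{cap of radius }\theta)$, valid whenever $2\theta\le\pi$; if $2\theta_N>\pi$ we simply use $p_N\le 1$.

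Write the cap measure in polar form:
\[
\sigma(\text{cap of radius }r)=c_d\int_0^{r}\sin^{d-2}t\,dt .
\]
Substituting $t=2s$ gives
\[
\int_0^{2\theta}\sin^{d-2}t\,dt=2\int_0^{\theta}\sin^{d-2}(2s)\,ds=2\int_0^\theta 2^{d-2}\sin^{d-2}s\cos^{d-2}s\,ds .
\]
Since $|\cos s|\le 1$ on this range, this is at most $2^{d-1}\int_0^\theta\sin^{d-2}s\,ds$. Here we need $s\in[0,\theta]\subset[0,\pi/2]$, so that $\sin(2s)\ge0$ and the integrand $\sin^{d-2}(2s)=(2\sin s\cos s)^{d-2}$ is handled correctly; this holds when $2\theta\le\pi$. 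Hence $p_N\le 2^{d-1}/N$ in that case.

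The remaining case, and the main (mild) obstacle, is the regime where the radius is large. Since $N\ge2$, the cap has measure at most $1/2$, so $\theta_N\le\pi/2$ and $2\theta_N\le\pi$. The doubled cap therefore never wraps past the antipode, the substitution is legitimate, and no separate case is needed. As a sanity check, for $d=2$ (the circle) the bound reads $p_N\le 2/N$, which matches the exact value (arc of length twice the original).
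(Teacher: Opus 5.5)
Your proposal is correct and follows essentially the same route as the paper. Both arguments identify $p_N$ with the measure of the cap of doubled radius $2\theta_N$ via rotation invariance, use $\sigma(C_N)=1/N\le 1/2$ to get $\theta_N\le\pi/2$, and then substitute $t=2s$ together with $\sin(2s)\le 2\sin s$ on $[0,\pi/2]$; your preliminary case $2\theta_N>\pi$ is vacuous, as you note yourself.
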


\begin{proof}
Let \(r_N\in(0,\pi)\) denote the geodesic radius of the cap \(C_N(x)\), $x\in\Sp$. Since \(\sigma(C_N(x))=1/N\) for every \(x\in\Sp\), the number \(r_N\) is well-defined and does not depend on \(x\).

We first note that
\[
p_N=\sigma(C_{2r_N}(x)),
\]
where \(C_{2r_N}(x)\) denotes the geodesic spherical cap of radius \(2r_N\) centered at \(x\). Indeed, two caps of radius \(r_N\) intersect if and only if the geodesic distance between their centers is at most \(2r_N\). Hence, if \(U_1,U_2\) are independent and \(\sigma\)-distributed on \(\Sp\), then, conditioning on \(U_1\) and using rotational invariance,
\[
p_N
=
\PP\big(C_N(U_1)\cap C_N(U_2)\neq\varnothing\big)
=
\PP\big(d_{\Sp}(U_1,U_2)\le 2r_N\big)
=
\sigma(C_{2r_N}(x))
\]
for any fixed \(x\in\Sp\), where $d_{\Sp}$ is the geodesic distance on $\Sp$.

Next, recall that the normalized surface measure of a geodesic spherical cap of radius \(r\in[0,\pi]\) is given by
\[
\sigma(C_r(x))
=
\frac{\int_0^r \sin^{d-2}(t)\,dt}{\int_0^\pi \sin^{d-2}(t)\,dt},
\]
independently of \(x\in\Sp\), see, e.g.,\ \cite[Equation (2.7)]{KabluchkoSteigenbergerThaele}. Since \(\sigma(C_{r_N}(x))=1/N\le 1/2\), we have \(r_N\le \pi/2\). Therefore,
\begin{align*}
p_N
&=
\frac{\int_0^{2r_N}\sin^{d-2}(t)\,dt}{\int_0^\pi \sin^{d-2}(t)\,dt} =\frac{2\int_0^{r_N}\sin^{d-2}(2u)\,du}{\int_0^\pi \sin^{d-2}(t)\,dt}
\end{align*}
by the substitution \(t=2u\). Using that
\[
\sin(2u)=2\sin(u)\cos(u)\le 2\sin(u),
\qquad u\in[0,\pi/2],
\]
we obtain
\[
\sin^{d-2}(2u)\le 2^{d-2}\sin^{d-2}(u),
\qquad u\in[0,r_N].
\]
Consequently,
\begin{align*}
p_N
&\le
\frac{2^{d-1}\int_0^{r_N}\sin^{d-2}(u)\,du}{\int_0^\pi \sin^{d-2}(t)\,dt}=
2^{d-1}\sigma(C_{r_N}(x))
=
\frac{2^{d-1}}{N}.
\end{align*}
This proves the claim.
\end{proof}

We are now in a position to derive the Kolmogorov bound for the standardized covered-volume functional
\[
W_N:=\frac{V_N-\E[V_N]}{\sqrt{\var(V_N)}}.
\]

\begin{proof}[Proof of Theorem \ref{thm:mainThm}]
Substituting the bounds of Lemma~\ref{lem:first-difference-bound} and Lemma~\ref{lem:delta1-delta2-reduction} into Proposition~\ref{prop:ShaoZhang} yields
\begin{align*}
d_K(W_N,\mathcal N(0,1))
&\le
12\, N^{1/2}\var(V_N)^{-1}
\left(
\sqrt{\frac{4p_N}{N^3}}
+
N\sqrt{\frac{16p_N^2}{N^4}}
+
\sqrt{\frac{16}{N^4}}
\right)\\
&=
12\,\var(V_N)^{-1}
\left(
\frac{2\sqrt{p_N}}{N}
+
\frac{4\,p_N}{\sqrt N}
+
\frac{4}{N^{3/2}}
\right).
\end{align*}
Next, we use Lemma~\ref{lem:pN-bound} to upper bound the intersection probability $p_N$ and Proposition~\ref{thm:HK-variance} to lower bound the variance $\var(V_N)$. This yields
\[
d_K(W_N,\mathcal N(0,1))
\le
12\,e^{C_1}c_1^{1-d}
\left(2^{(d+1)/2}+2^{d+1}+4\right)\frac1{\sqrt N}.
\]
Next, we note that, for \(d\geq 2\), $2^{(d+1)/2}\leq 2^{d+1}$ and  $4\leq 2^{d+1}$,
and therefore $2^{(d+1)/2}+2^{d+1}+4\leq 3\cdot 2^{d+1}$.
Consequently,
\begin{equation}\label{eq:RateProof}
d_K(W_N,\mathcal N(0,1))
\le
36\,e^{C_1}c_1^{1-d}2^{d+1}\frac1{\sqrt N}
=
72\,e^{C_1}c_1\,
\frac{(2/c_1)^d}{\sqrt N}.    
\end{equation}
Now assume that $d\leq \alpha \log N$ for some \(\alpha>0\). Then
\[
\Big(\frac{2}{c_1}\Big)^d
\leq
\Big(\frac{2}{c_1}\Big)^{\alpha\log N}
=
N^{\alpha\log(2/c_1)},
\]
so that
\[
d_K(W_N,\mathcal N(0,1))
\le
72\,e^{C_1}c_1\,
N^{\alpha\log(2/c_1)-1/2},
\]
which is the desired error bound. Now, by assumption, $\alpha<\frac{1}{2\log(2/c_1)}$, so we have $\alpha\log(2/c_1)-\frac12<0$ and hence
\[
N^{\alpha\log(2/c_1)-1/2}\longrightarrow 0
\qquad\text{as }N\to\infty.
\]
It follows that
\[
d_K(W_N,\mathcal N(0,1))\longrightarrow 0
\qquad\text{as }N\to\infty.
\]
Finally, since convergence in the Kolmogorov distance implies convergence in distribution, we conclude that
\[
W_N\stackrel{d}{\longrightarrow}\mathcal{N}(0,1)\qquad\text{as }N\to\infty.
\]
This proves the Central Limit Theorem.
\end{proof}

%%%%%%%%%%%%%%%%%%%%%%%%%%%
\section*{Acknowledgments}

We would like to thank Philipp Tuchel (Bochum) for interesting conversations about formal verifications in \textsc{Lean}. CT was supported by the DFG via SPP 2265 \textit{Random Geometric Systems}.

%%%%%%%%%%%%%%%%%%%%%%%%%%%%%%%%%%%%%%
\bibliographystyle{plain}
\bibliography{main}

%%%%%%%%%%%%%%

\vspace{3mm}

\noindent {\sc Department of Mathematics \& Computer Science, Longwood University, 201 High St., Farmville, Virginia 23901, USA}

\noindent {\it E-mail address:} {\tt hoehnersd@longwood.edu}

\vspace{3mm}

\noindent {\sc Faculty of Mathematics, Ruhr University Bochum, 44801 Bochum, Germany}

\noindent {\it E-mail address:} {\tt christoph.thaele@rub.de}

\end{document}